\newtheorem{thr}{Theorem}
\newtheorem{lem}[thr]{Lemma}
\newtheorem{prop}[thr]{Proposition}
\newtheorem{cor}[thr]{Corollary}
\newtheorem{claim}[thr]{Claim}
\theoremstyle{definition}
\theoremstyle{remark}
\newtheorem{remr}[thr]{Remark}
\def\R{\mathbb{R}}
\def\ic{\operatorname{ic}}
\def\trd{\operatorname{trdeg}}
\def\col{\operatorname{col}}
\def\pout{\mathcal{P}_{out}}
\def\pin{\mathcal{P}_{in}}
\journal{arXiv.org}
\begin{document}

\begin{frontmatter}













\title{Euclidean distance matrices and separations in communication complexity theory}


\author{Yaroslav Shitov}

\ead{yaroslav-shitov@yandex.ru}

\address{National Research University Higher School of Economics, 20 Myasnitskaya Ulitsa, Moscow 101000, Russia}

\begin{abstract}
A Euclidean distance matrix $D(\alpha)$ is defined by $D_{ij}=(\alpha_i-\alpha_j)^2$, where $\alpha=(\alpha_1,\ldots,\alpha_n)$ is a real vector. We prove that $D(\alpha)$ cannot be written as a sum of $\left[2\sqrt{n}-2\right]$ nonnegative rank-one matrices, provided that the coordinates of $\alpha$ are algebraically independent. This result allows one to solve several open problems in computation theory. In particular, we provide an asymptotically optimal separation between the complexities of quantum and classical communication protocols computing a matrix in expectation.
\end{abstract}

\begin{keyword}

nonnegative matrix factorization, extended formulations of polytopes, positive semidefinite rank, communication complexity

\MSC[2010] 15A23, 52B12, 81P45

\end{keyword}

\end{frontmatter}

\section{Introduction}

A significant number of recent publications are devoted to the study of different rank functions of matrices arising from different measures of complexity in the theory of computation. Examples of such functions include the nonnegative and positive semidefinite ranks of a matrix, the quantum and classical communication complexities and many others. The aim of our paper is to solve several open problems concerning the mutual behaviour of these functions.

Let $A$ be a real matrix with nonnegative entries. The \textit{nonnegative rank} of $A$ is the smallest integer $k$ such that $A$ can be written as a sum of $k$ rank-one nonnegative matrices. The nonnegative rank arises in the theory of computation as the measure of complexity of a linear program describing a polytope corresponding to a given matrix~\cite{Yan}. Another interesting rank function, known as the \textit{positive semidefinite} (or \textit{psd}) rank, arises in the similar fashion but from semidefinite descriptions of polytopes~\cite{psd-rank}. More precisely, the psd rank of $A$ is the smallest $k$ such that there are two tuples of positive semidefinite $k\times k$ matrices, $(B_1,\ldots,B_n)$ and $(C_1,\ldots,C_m)$, such that an $(i,j)$th entry of $A$ equals $\operatorname{tr}(B_iC_j)$.

Also, the functions introduced above have applications in the communication complexity theory. For instance, the value $\lceil\log_2\operatorname{rank}_+(A)\rceil$ is the optimal size of a \textit{classical randomized} communication protocol computing $A$ in expectation. Similarly, $\lceil\log_2\operatorname{rank}_{psd}(A)\rceil$ is the optimal size of a \textit{quantum} communication protocol computing $A$. We refer the reader to~\cite{Jain} for a more detailed treatment of these questions. We also note that the above mentioned rank functions find several applications not directly related to computer science. In particular, the concept of nonnegative rank is important in statistics~\cite{KRS}, data mining~\cite{LS} and many other contexts~\cite{CR}.

\section{Our results}

Our paper deals with the family of so called \textit{Euclidean distance matrices}, which are an interesting source of examples illustrating the behavior of the above mentioned functions. Let $\alpha=(\alpha_1,\ldots,\alpha_n)$ be a real vector with $n\geqslant3$ and pairwise distinct coordinates. We define the Euclidean distance matrix as the $n\times n$ matrix $D(\alpha)$ whose $(i,j)$th entry equals $D_{ij}=(\alpha_i-\alpha_j)^2$. Beasley and Laffey~\cite{BL} showed that the classical rank of the matrix $D(1,2,\ldots,n)$ equals three and that the nonnegative rank of it gets arbitrarily large as $n$ goes to infinity. They conjectured that the maximal possible rank of an $n\times n$ Euclidean 
distance matrix is $n$, but this conjecture has been refuted in~\cite{my7x7}. In the abstract of~\cite{Hrubes}, Hrube\v{s} mentions the problem asking whether or not the condition $\operatorname{rank}_+D(\alpha)\in O(\ln n)$ holds for all $\alpha\in\R^n$. He gives an affirmative solution of this problem for some families of vectors $\alpha\in\R^n$, but the general case remains open. As we will show, the solution of this problem is in fact negative. Actually, we will prove that almost all vectors $\alpha\in\mathbb{R}^n$ are such that $\operatorname{rank}_+D(\alpha)$ grows as a power of $n$.

\begin{thr}\label{thrdistlower}
If the coordinates of a vector $\alpha\in\mathbb{R}^n$ are algebraically independent over $\mathbb{Q}$, then $\operatorname{rank}_+ D(\alpha)\geqslant 2\sqrt{n}-2$.
\end{thr}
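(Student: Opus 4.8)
The plan is to argue by contradiction: assume $D(\alpha)=\sum_{k=1}^{r}u_kv_k^{T}$ with every $u_k,v_k\geqslant 0$ and $r<2\sqrt n-2$, and extract from this a nontrivial polynomial relation among $\alpha_1,\dots,\alpha_n$ over $\mathbb{Q}$. First I would record the structural consequences of nonnegativity together with the Beasley--Laffey fact that $\operatorname{rank}D(\alpha)=3$. Put $U=[u_1|\cdots|u_r]$ and let $v_1^{T},\dots,v_r^{T}$ be the rows of $V$. From $D_{jj}=0$ and the nonnegativity of each summand one gets $\operatorname{supp}u_k\cap\operatorname{supp}v_k=\varnothing$ for every $k$, while the rectangles $\operatorname{supp}u_k\times\operatorname{supp}v_k$ must cover all off-diagonal positions. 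Since the column space $W$ of $D(\alpha)$ is $\operatorname{span}\{(\alpha_i^2)_i,(\alpha_i)_i,\mathbf 1\}$, the $j$-th column is $\gamma(\alpha_j)$ with $\gamma(t)_i=(\alpha_i-t)^2$, a point of a planar parabola contained in the nonnegative orthant; applying the orthogonal projection off $W$ to $D_{\cdot j}=\sum_kV_{kj}u_k$ shows that every column of $V$ lies in a common subspace of dimension $r-(\operatorname{rank}U-3)$, so $\operatorname{rank}U+\operatorname{rank}V\leqslant r+3$, and likewise with $U$ and $V$ interchanged.

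The core of the argument is to turn a factorization into an efficient description of $D(\alpha)$ by exploiting algebraic independence. The case $\operatorname{rank}U=3$ already overshoots and is instructive: then $\operatorname{Col}U=W$, so $(u_k)_i=g_k(\alpha_i)$ for a quadratic $g_k$, and whenever $v_k$ contributes to column $j$ the zero entry $(D_{\cdot j})_j=0$ forces $g_k(\alpha_j)=0$; hence each $v_k$ has support of size at most two, $\sum_k|\operatorname{supp}u_k|\cdot|\operatorname{supp}v_k|\leqslant 2rn$, and this is below $n^2-n$ unless $r$ is already linear in $n$ --- a contradiction. In general, with $p=\operatorname{rank}U\geqslant3$ and $q=\operatorname{rank}V\geqslant3$ and $p+q\leqslant r+3$, the plan is to normalize the factorization so that, up to finitely many discrete choices, the columns of $U$ and the rows of $V$ are determined by about $p$ ``row parameters'' and $q$ ``column parameters'', each of the $n$ columns of $D(\alpha)$ being governed by one parameter of each kind. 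This should exhibit $D(\alpha)$ over a field of transcendence degree at most $pq+O(p+q)\leqslant\tfrac14(r+2)^2$; since $\trd_{\mathbb{Q}}\mathbb{Q}(\alpha_1,\dots,\alpha_n)=n$, this gives $n\leqslant\tfrac14(r+2)^2$, i.e.\ $r\geqslant2\sqrt n-2$.

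I expect the encoding step in the general case to be the main obstacle. One cannot simply reduce to $\operatorname{rank}U=\operatorname{rank}V=3$: that would estimate the \emph{restricted} nonnegative rank, which for Euclidean distance matrices already grows linearly in $n$, so it is the wrong quantity. The real difficulty is to show that the $p-3$ and $q-3$ ``excess'' directions of $\operatorname{Col}U$ and $\operatorname{Row}V$ --- constrained by nonnegativity, by the disjoint-support condition, and by the fact that the rows and columns of $D(\alpha)$ trace out a parabola --- cannot carry more than roughly $pq$ algebraically independent quantities; balancing $p$ against $q$ through $p+q\leqslant r+3$ is then what upgrades a crude $\Omega(\sqrt n)$ to the sharp constant.
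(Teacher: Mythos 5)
Your overall strategy -- derive from a hypothetical rank-$r$ factorization a description of $D(\alpha)$ over a field of transcendence degree roughly $\tfrac14(r+O(1))^2$ and contradict algebraic independence -- is indeed the same numerology as the paper, and your warm-up (zero diagonal forces $\operatorname{supp}u_k\cap\operatorname{supp}v_k=\varnothing$, and in the case $\operatorname{rank}U=3$ each $v_k$ has support of size at most two, so $r=\Omega(n)$) is correct. But the proposal has a genuine gap, and it is exactly the step you yourself flag as ``the main obstacle'': you never prove that a factorization $D=UV$ can be normalized, up to finitely many discrete choices, so that its entries lie in a field of transcendence degree about $pq$. Nothing in the sketch explains how to do this, and it is not a routine step: the column space of $U$ contains $\operatorname{col}(D)=\operatorname{span}\{\mathbf 1,\alpha,\alpha^2\}$, which already carries transcendence degree $n$, so any naive parameter count on $U$ and $V$ fails; and the obvious normalizations $(U,V)\mapsto(UM,M^{-1}V)$ are obstructed by the nonnegativity constraints you must preserve. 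Sylvester's inequality $\operatorname{rank}U+\operatorname{rank}V\leqslant r+3$ is true but does not by itself localize the transcendence. (There is also a bookkeeping slip: from $p+q\leqslant r+3$ you get $pq\leqslant\tfrac14(r+3)^2$, not $\tfrac14(r+2)^2$, and the field generated by the entries of $D$ alone has transcendence degree $n-1$, not $n$, because of translation invariance; so even granting the encoding step your route gives a slightly weaker constant than the stated bound.)

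For comparison, the paper closes precisely this gap by changing the data one counts. A factorization of the column-stochastic matrix $D'$ yields a polygon $P$ with $\pin(D')\subset P\subset\pout(D')$ that is a two-dimensional slice of a polytope $Q$ with at most $r$ vertices (Proposition~\ref{proppinpout}); the counting is then done on $Q$: an affine map fixing the slice plane makes $d-2$ of its vertices rational, giving $\trd(P)\leqslant d(v-d+2)\leqslant\tfrac14(v+2)^2$ (Lemma~\ref{lemlowtrdeg}, Theorem~\ref{thrlowtrdeg}). The lower bound $\trd(P)\geqslant n$ is not extracted from the entries of $D$ but from geometry: every edge of the outer $n$-gon must contain a vertex of $P$ (Claims~\ref{cla3}--\ref{cla4}), and after a projective change of coordinates to the explicit polygon $W$, any point on the line through $w_{k-1},w_k$ determines $\alpha_k$ algebraically via Vieta (Claims~\ref{cla5}--\ref{cla6}). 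Your sketch contains no analogue of either ingredient -- neither the ``every facet of the outer body is touched'' incidence statement nor the mechanism recovering each $\alpha_k$ individually from the unknown intermediate object -- so as it stands the contradiction is not reached; to complete your route you would essentially have to rebuild this nested-polytope argument in factorization language.
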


\begin{remr}
One can construct a family of $n$ algebraically independent numbers as $b_i=\exp a_i$, where $a_1,...,a_n$ is a family of real numbers linearly independent over $\mathbb{Q}$. (This is the famous \textit{Lindemann--Weierstrass theorem}.) Therefore, the lower bound for the nonnegative rank as in the above theorem works for $D(b_1,\ldots,b_n)$ as well.
\end{remr}

We can get some other interesting separations as corollaries of Theorem~\ref{thrdistlower}. In particular, let us compare the behaviour of the nonnegative and psd ranks. It is a basic result that the former is greater than or equal to the latter, but how large the difference can be? As pointed out in~\cite{psd-rank}, the psd rank of $D(\alpha)$ is always equal to two. Therefore, the above mentioned result from~\cite{BL} yields an example of a family of matrices whose psd-ranks are bounded but nonnegative ranks grow logarithmically with the size of a matrix. The foundational paper~\cite{FMPTdW} provides a family of $n\times n$ matrices whose nonnegative ranks grow as a power of $n$ while the psd ranks grow logarithmically. The question of whether this separation is optimal has been left open. In particular, do there exist matrices with bounded psd ranks whose nonnegative ranks grow as a power of the size? The problem of separating the nonnegative and psd ranks has been discussed also in~\cite{Jain}, but the above mentioned question remained open. We get the answer as a corollary of Theorem~\ref{thrdistlower}.

\begin{cor}\label{cordistlower1}
There is a matrix $D\in\R^{2^n\times2^n}$ such that $\operatorname{rank}_{psd}(D)=2$ and $\operatorname{rank}_{+}(D)=2^{\Omega(n)}$.
\end{cor}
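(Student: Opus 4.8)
The plan is to derive Corollary~\ref{cordistlower1} from Theorem~\ref{thrdistlower} by combining two simple observations: that the psd rank of a Euclidean distance matrix is always $2$, and that algebraically independent vectors of exponentially large length exist inside $\R^{2^n}$. First I would recall (as is noted in the excerpt, following~\cite{psd-rank}) that $D(\alpha)$ has psd rank exactly $2$ for every vector $\alpha$ with pairwise distinct coordinates; this follows from the factorization $D_{ij}=(\alpha_i-\alpha_j)^2=\operatorname{tr}(B_iC_j)$ obtained by taking $B_i,C_j$ to be suitable rank-one (hence positive semidefinite) $2\times2$ matrices built from the vectors $(1,\alpha_i)$ and $(\alpha_j^2,-2\alpha_j,1)$ — more precisely $B_i = \begin{pmatrix}\alpha_i^2 & \alpha_i\\ \alpha_i & 1\end{pmatrix}$ paired appropriately — and the lower bound $\operatorname{rank}_{psd}\geqslant2$ is immediate since $D(\alpha)\neq0$. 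So the psd rank is pinned at $2$ with no dependence on the size.

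Next I would fix $N=2^n$ and choose a vector $\alpha\in\R^{N}$ whose coordinates are algebraically independent over $\Q$; such vectors exist in abundance (indeed, almost every vector in $\R^N$ has this property, and the remark after Theorem~\ref{thrdistlower} even gives an explicit construction via the Lindemann--Weierstrass theorem). Setting $D=D(\alpha)\in\R^{N\times N}=\R^{2^n\times 2^n}$, the first observation gives $\operatorname{rank}_{psd}(D)=2$, and Theorem~\ref{thrdistlower} applied with this $N$ gives
\[
\operatorname{rank}_+(D)\;\geqslant\;2\sqrt{N}-2\;=\;2\cdot 2^{n/2}-2\;=\;2^{\Omega(n)}.
\]
That is exactly the claimed bound, so the corollary follows.

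The only mild subtlety — and the single step that deserves care rather than the single "hard part" — is making sure the two ingredients are simultaneously available for the \emph{same} matrix: the psd-rank-$2$ identity needs the coordinates of $\alpha$ to be pairwise distinct, while Theorem~\ref{thrdistlower} needs them to be algebraically independent, and of course algebraic independence over $\Q$ forces pairwise distinctness (if $\alpha_i=\alpha_j$ then $\alpha_i-\alpha_j=0$ is a nontrivial polynomial relation), so there is no conflict. Beyond that, everything is bookkeeping: one substitutes $n\mapsto 2^n$ into the statement of Theorem~\ref{thrdistlower} and reads off that a $\sqrt{N}$-type lower bound in terms of the matrix dimension $N=2^n$ is precisely an exponential lower bound $2^{\Omega(n)}$ in $n$. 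I expect no real obstacle here; the corollary is a packaging of Theorem~\ref{thrdistlower} together with the known psd-rank computation for distance matrices.
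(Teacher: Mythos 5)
Your proposal is correct and matches the paper's (implicit) argument exactly: take $\alpha\in\R^{2^n}$ with algebraically independent coordinates, cite the fact from the psd-rank literature that $\operatorname{rank}_{psd}D(\alpha)=2$, and apply Theorem~\ref{thrdistlower} with matrix size $N=2^n$ to get $\operatorname{rank}_+(D)\geqslant 2\cdot2^{n/2}-2=2^{\Omega(n)}$. One tiny quibble: $D\neq0$ alone does not rule out psd rank $1$ (that would only follow for a matrix of ordinary rank $>1$), but since $\operatorname{rank}(D)=3$ this is immediate, and the paper simply cites~\cite{psd-rank} for the exact value $2$ in any case.
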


This corollary is also interesting from the point of view of the communication complexity theory. As pointed out above, the logarithms of psd and nonnegative ranks, respectively, are optimal sizes of quantum and classical communication protocols computing a given matrix in expectation. Therefore, we get the asymptotically optimal separation between the quantum and classical communication complexities. The existence of such a separation was an open problem despite the efforts mentioned in the above paragraph. The corresponding question was explicitly posed in~\cite{Zhang} as Problem~4 in Section~5.

\begin{cor}\label{cordistlower2}
There is a nonnegative matrix $D\in\R^{2^n\times2^n}$ which can be computed with a one-bit quantum communication protocol but requires $\Omega(n)$ bits to be computed by a classical randomized protocol in expectation.
\end{cor}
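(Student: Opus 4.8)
The plan is to read off this statement directly from Corollary~\ref{cordistlower1} by passing through the dictionary between rank functions and communication protocols recalled in the introduction. I would take $D\in\R^{2^n\times2^n}$ to be exactly the matrix produced by Corollary~\ref{cordistlower1}, so that $\operatorname{rank}_{psd}(D)=2$ and $\operatorname{rank}_+(D)=2^{\Omega(n)}$, and then convert each of these two facts into the corresponding protocol bound.

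For the quantum side, I would use the correspondence (see~\cite{Jain}) under which $\lceil\log_2\operatorname{rank}_{psd}(A)\rceil$ is the optimal size of a quantum communication protocol computing $A$. Applied to $D$ this gives $\lceil\log_2 2\rceil=1$, so a single bit of quantum communication suffices. For the classical side, the companion statement says that $\lceil\log_2\operatorname{rank}_+(A)\rceil$ is the optimal size of a classical randomized protocol computing $A$ in expectation; with $\operatorname{rank}_+(D)=2^{\Omega(n)}$ this forces $\lceil\log_2\operatorname{rank}_+(D)\rceil=\Omega(n)$, which is the asserted lower bound. Combining the two bounds for the single matrix $D$ yields the corollary.

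The only real work lies in matching conventions: one must check that the notions of ``quantum communication protocol'' and of ``classical randomized protocol computing in expectation'' appearing in the cited equivalences coincide with those intended in the statement --- in particular the normalization of protocol size, the presence or absence of shared randomness or entanglement, and the effect of the ceiling function when the psd rank is exactly $2$. Once these definitions are fixed, no further argument is needed beyond Corollary~\ref{cordistlower1}. I do not anticipate a genuine obstacle here; the entire content of the separation is carried by the lower bound on $\operatorname{rank}_+D(\alpha)$ established in Theorem~\ref{thrdistlower}.
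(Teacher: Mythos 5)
Your proposal matches the paper's intended argument exactly: the paper derives this corollary by taking the matrix $D$ of Corollary~\ref{cordistlower1} and invoking the same dictionary from~\cite{Jain}, namely that $\lceil\log_2\operatorname{rank}_{psd}\rceil$ and $\lceil\log_2\operatorname{rank}_{+}\rceil$ are the optimal sizes of quantum and classical randomized protocols computing the matrix in expectation. No further argument is given or needed, so your reduction to Corollary~\ref{cordistlower1} (and ultimately to Theorem~\ref{thrdistlower}) is the same route the paper takes.
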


The goal of this paper is to prove Theorem~\ref{thrdistlower}. Our approach is mostly geometric, and we use the characterization of the nonnegative rank in terms of the classical \textit{nested polytopes} problem. The necessary general results and a description of our technique are provided in Section~\ref{sectech}. The proof of Theorem~\ref{thrdistlower} is completed in Section~\ref{secpr}.

\section{Our technique}\label{sectech}

The foundational paper~\cite{Yan} by Yannakakis established a connection between the nonnegative rank and the concept known as the \textit{extension complexity} of a polytope. Our proof of Theorem~\ref{thrdistlower} is based on the variation of Yannakakis' theory which we develop in this section.

Instead of working with a more common concept of extension complexity, we work with a somewhat dual concept of \textit{intersection complexity}, see~\cite{PP} for details. These invariants are always equal to each other, and all the results on one of them hold for the other as well. (However, we do not use the fact that they are equal in the proof of Theorem~\ref{thrdistlower}.) Let $P\subset\R^d$, $Q\subset\R^n$ be polytopes and $H=\{x\in\R^n\left| x_{d+1}=\ldots=x_n=0\right.\}$ a plane in $\R^n$. We say that $P$ is a \textit{slice} of $Q$ if $P=Q\cap H$. The \textit{intersection complexity} of $P$, denoted $\ic(P)$, is the smallest integer $k$ such that $P$ is a slice of a polytope with $k$ vertices.

We will say that a real vector is \textit{stochastic} if its entries sum up to one. Now let $A$ be a nonnegative column-stochastic $n\times m$ matrix. That is, we assume that the columns of $A$ are taken from the standard simplex $\Delta_n=\{x\in\R^n\left|x_1+\ldots+x_n=1,x_i\geqslant0\right.\}$. We denote by $\operatorname{col}(A)$ the linear subspace of $\R^n$ spanned by the columns of $A$, and we define the two polytopes, $\mathcal{P}_{in}(A)$ and $\mathcal{P}_{out}(A)$, as follows. We set $\mathcal{P}_{out}(A)=\Delta_n\cap\col(A)$, and we define $\mathcal{P}_{in}(A)$ as the convex hull of the vertices of $A$. The following proposition can be seen as a variation of the study of nested polytope problems by Gillis and Glineur~\cite{GG}.

\begin{prop}\label{proppinpout}
Let $A$ be a nonnegative column-stochastic $n\times m$ matrix. If $\operatorname{rank}_+(A)\leqslant r$, then there is a polytope $P$ satisfying $\mathcal{P}_{in}(A)\subset P\subset \mathcal{P}_{out}(A)$ and $\ic(P)\leqslant r$.
\end{prop}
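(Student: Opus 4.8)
The plan is to reconstruct the nonnegative factorization of $A$ geometrically. Write $A = \sum_{t=1}^r u_t v_t^\top$ as a sum of $r \leqslant \operatorname{rank}_+(A)$ rank-one nonnegative matrices, or equivalently $A = UV$ where $U$ is $n\times r$ nonnegative with columns $u_1,\dots,u_r$ and $V$ is $r\times m$ nonnegative with rows $v_1^\top,\dots,v_r^\top$. Since $A$ is column-stochastic, I would first normalize: scaling the $t$th column of $U$ and the $t$th row of $V$ by reciprocal constants, I may assume each column of $U$ either is zero (discard it, only decreasing $r$) or lies in the simplex $\Delta_n$; and then, since the columns of $A$ sum to one and are nonnegative combinations of the $u_t\in\Delta_n$, the coefficient vectors — the columns of $V$ — automatically become stochastic too. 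Thus I obtain at most $r$ points $u_1,\dots,u_r\in\Delta_n$ whose convex hull, call it $P$, contains every column of $A$ (each column being a convex combination of the $u_t$), hence contains $\mathcal P_{in}(A)$, the convex hull of the columns of $A$.

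Next I would check the two remaining containments. Each $u_t$ is a nonnegative combination of the columns of $A$ (read off row $t$ of $U$ against... more carefully: $u_t$ is a column of $U$, and we need it in $\operatorname{col}(A)$) — in fact, since the columns of $A$ span $\operatorname{col}(A)$ and $P\subset\Delta_n$, I want $P\subset\mathcal P_{out}(A)=\Delta_n\cap\operatorname{col}(A)$, so it suffices to show each $u_t\in\operatorname{col}(A)$. This is where a small amount of care is needed: a priori $u_t$ need only lie in $\operatorname{col}(U)$, which could be strictly larger than $\operatorname{col}(A)=\operatorname{col}(UV)$. I would handle this by discarding redundant columns of $U$: choose a factorization $A=UV$ with $r$ minimal among all nonnegative factorizations; then no column $u_t$ can be a nonnegative combination of the others together with... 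Actually the clean fix is to first pass to the column space — restrict attention to $\operatorname{col}(A)$, observe $\mathcal P_{in}(A)$ and $\mathcal P_{out}(A)$ both live in it, and replace $U$ by the projection of its columns onto $\operatorname{col}(A)$; but projection may destroy nonnegativity. The robust argument: among all nonnegative factorizations of $A$ into $r$ rank-one terms, pick one minimizing $r$; then $\operatorname{col}(U)=\operatorname{col}(A)$, since otherwise one checks a column of $U$ is a nonnegative combination of the rest and can be eliminated. Granting $\operatorname{col}(U)=\operatorname{col}(A)$, every $u_t\in\operatorname{col}(A)$, so $P=\operatorname{conv}\{u_1,\dots,u_r\}\subset\Delta_n\cap\operatorname{col}(A)=\mathcal P_{out}(A)$.

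Finally I would bound the intersection complexity. The polytope $Q=\operatorname{conv}\{u_1,\dots,u_r\}\subset\Delta_n\subset\R^n$ has at most $r$ vertices, and $P=Q$ is trivially a slice of $Q$ (take the ambient plane $H$ to be all of $\R^n$, i.e.\ $d=n$), so $\ic(P)\leqslant r\leqslant\operatorname{rank}_+(A)\leqslant r$ — wait, we must only produce \emph{some} $P$ with the nested property and $\ic(P)\leqslant r$, and $P=Q$ itself works: $\mathcal P_{in}(A)\subset P\subset\mathcal P_{out}(A)$ and $P$ has at most $r$ vertices hence $\ic(P)\leqslant r$. The main obstacle is the normalization/minimality step ensuring both that the scaled columns of $U$ land in $\Delta_n$ and that $\operatorname{col}(U)=\operatorname{col}(A)$ so that $P$ does not poke out of the affine hull cut by $\operatorname{col}(A)$; once that bookkeeping is done, the geometric containments are immediate from the definitions of $\mathcal P_{in}$ and $\mathcal P_{out}$.
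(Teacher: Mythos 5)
There is a genuine gap, and it sits exactly at the point you flagged; the ``robust argument'' you propose to close it is false. It is not true that a nonnegative factorization $A=UV$ with the minimal number $r$ of rank-one terms must satisfy $\col(U)=\col(A)$, and your elimination argument does not prove it: if $\col(U)\supsetneq\col(A)$, no column of $U$ need be a nonnegative combination of the remaining ones, so nothing can be discarded. What you are implicitly asserting is that the \emph{restricted} nonnegative rank of Gillis and Glineur~\cite{GG} (where one insists $\operatorname{rank}(U)=\operatorname{rank}(A)$) always coincides with $\operatorname{rank}_+(A)$; it does not, and the matrices of this very paper are counterexamples. For $A=D'$ built from $\alpha=(1,2,\ldots,n)$ one has $\operatorname{rank}_+(A)=O(\log n)$ by Hrube\v{s}~\cite{Hrubes}, yet if $\col(U)=\col(A)$ with the columns of $U$ in $\Delta_n$, then $\operatorname{conv}\{u_1,\ldots,u_r\}$ would be a polygon nested between $\pin(A)$ and $\pout(A)$, and the argument of Claims~\ref{cla3}--\ref{cla4} (which uses no algebraic independence) forces such a polygon to have a vertex on each of the $n$ edges of the outer $n$-gon, hence at least $n/2$ vertices. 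So for these matrices \emph{every} minimal factorization has $\col(U)\supsetneq\col(A)$, your $P=\operatorname{conv}\{u_1,\ldots,u_r\}$ genuinely pokes out of $\col(A)$, and the containment $P\subset\pout(A)$ fails.

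The repair is the paper's actual route: do not try to force the points $u_t$ into $\col(A)$; instead cut their hull by the relevant affine subspace. Keep your normalization (scale so that each $u_t\in\Delta_n$; this is exactly the paper's first step) and set $P=\operatorname{conv}\{u_1,\ldots,u_r\}\cap\col(A)\cap\{x_1+\cdots+x_n=1\}$. Then $\pin(A)\subset P$, since each column of $A$ is a convex combination of the $u_t$ and lies in $\col(A)\cap\{x_1+\cdots+x_n=1\}$; and $P\subset\Delta_n\cap\col(A)=\pout(A)$ because the $u_t$ lie in $\Delta_n$. Finally $\ic(P)\leqslant r$ holds not because $P$ has few vertices (after cutting it may have more than $r$), but because $P$ is by construction a slice of the polytope $\operatorname{conv}\{u_1,\ldots,u_r\}$, which has at most $r$ vertices: the definition of $\ic$ counts vertices of the ambient polytope, and an intersection with an arbitrary affine subspace becomes a coordinate-plane slice after an affine change of coordinates, under which $\ic$ is unchanged. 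With $P$ defined this way, the rest of your bookkeeping goes through and the argument coincides with the paper's proof.
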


\begin{proof}
Let $A=BC$ be a nonnegative factorization in which $B$ has at most $r$ columns. Since the transformation $(B,C)\to(BD,D^{-1}C)$ does not change the product $BC$, we can perform the scaling of the columns of $B$ and the corresponding scaling of the rows of $C$. Therefore, we can assume without loss of generality that the columns of $B$ belong to $\Delta_n$. We denote by $R$ the intersection of the convex hull of the columns of $B$ with the affine subspace $\col(A)\cap\{x_1+\ldots+x_n=1\}$. Since $B$ is nonnegative, we have $R\subset\pout(A)$; we also have $\pin(A)\subset R$ because the columns of $A$ are nonnegative combinations (and, therefore, convex combinations) of the columns of $B$.
\end{proof}

Now we are going to prove a useful lower bound on the intersection complexity of a polytope. Let $P$ be a polytope in $\mathbb{R}^d$; we denote by $\mathbb{Q}(P)$ the field obtained from $\mathbb{Q}$ by adjoining the coordinates of vertices of $P$. By  $\trd(P)$ we denote the transcendence degree of the field extension $\mathbb{Q}(P)\supset\mathbb{Q}$. The following results provide a lower bound for the quantity $\ic(P)$ in terms of $\trd(P)$. These results can be seen as an algebraic analogue of the corresponding result in~\cite{Padrol}, which itself is a generalization of the result in~\cite{FRT}.

\begin{lem}\label{lemlowtrdeg}
Let $Q\subset\R^d$ be a polytope with $v$ vertices, $l$ a rational affine subspace of $\R^d$, and $P=Q\cap l$. If $\dim Q=d$, $\dim l=k$, then $\trd(P)\leqslant d(v-d+k)$.
\end{lem}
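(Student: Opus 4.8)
The plan is to parametrize the possible configurations $(Q, l)$ producing a given slice $P$ and count how many free real parameters are needed. First I would observe that a polytope $Q$ with $v$ vertices in $\R^d$ is determined by its vertex coordinates, which is $dv$ real numbers. However, the slice $P = Q \cap l$ does not change under affine transformations of the ambient space that fix $l$ pointwise; more precisely, I will exploit the action of the affine group that stabilizes $l$ to reduce the count. Since $l$ is a rational subspace of dimension $k$, after a rational change of coordinates we may assume $l = \{x_{k+1} = \cdots = x_d = 0\}$. The subgroup of the affine group of $\R^d$ fixing $l$ pointwise has dimension $(d-k)d$ — it consists of maps $x \mapsto x + M x'$ where $x'$ is the projection onto the last $d-k$ coordinates and $M$ is an arbitrary $d \times (d-k)$ matrix, composed with arbitrary invertible linear maps on the last $d-k$ coordinates — wait, I need to be careful: I want the maps that fix $l$ pointwise and preserve the property "$Q \cap l$ is unchanged", so it suffices to use maps fixing $l$ pointwise, and the dimension of that stabilizer is exactly $d(d-k)$.

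The key step is then the following dimension count. The space of all $Q$ with at most $v$ vertices has dimension at most $dv$ as a parameter space. The orbit of a generic such $Q$ under the stabilizer group (dimension $d(d-k)$) consists of polytopes with the same slice $P$, provided the group action is generically free on the relevant locus, which holds because $\dim Q = d$ means the vertices affinely span $\R^d$ and a nontrivial affine map fixing $l$ pointwise moves some vertex off its original position (here one uses $\dim Q = d$ and $\dim l = k < d$ so that not all vertices lie in $l$). Hence the coordinates of the vertices of $P$ lie in a field generated by at most $dv - d(d-k) = d(v - d + k)$ algebraically independent elements over $\mathbb{Q}$, since the vertices of $P$ are (rational, hence $\mathbb{Q}$-definable) functions of the vertices of $Q$, and the fibers of the map $Q \mapsto P$ have dimension at least $d(d-k)$. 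Therefore $\trd(P) \leqslant d(v - d + k)$.

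I would make the fiber-dimension argument precise using elementary facts about transcendence degree: if $\phi$ is a $\mathbb{Q}$-rational map from an irreducible variety $V$ (the closure of the parameter space of valid configurations $Q$, of dimension at most $dv$) whose generic fiber has dimension at least $m$, then the image has dimension at most $\dim V - m$, and the transcendence degree over $\mathbb{Q}$ of the coordinate field of a generic point of the image is bounded by that dimension. The vertices of $P = Q \cap l$ are determined by $Q$ and $l$ through solving rational linear systems (intersecting facets of $Q$ with $l$), so each coordinate of a vertex of $P$ is algebraic over $\mathbb{Q}(Q)$; combined with the orbit argument giving $m \geqslant d(d-k)$, this yields the bound.

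The main obstacle I anticipate is verifying that the stabilizer group acts with fibers of dimension \emph{exactly} $d(d-k)$ (or at least this large) on a suitable dense subset — in particular ruling out that a positive-dimensional subgroup fixes $Q$ itself, which is where the hypothesis $\dim Q = d$ is essential. A secondary technical point is handling polytopes with fewer than $v$ vertices and ensuring the combinatorial type of the slice is generically constant along fibers, so that "the vertices of $P$" is a well-defined algebraic family; this can be dealt with by stratifying the parameter space by combinatorial type and treating each stratum separately, noting the bound only improves when there are fewer vertices.
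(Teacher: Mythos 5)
Your argument is correct in substance, but it takes a genuinely different route from the paper. Both proofs exploit the same key object, namely the group of affine transformations of $\R^d$ fixing $l$ pointwise, whose dimension is $d(d-k)$; but where you run an abstract orbit--fiber dimension count (stratify by combinatorial type, note the group acts freely on configurations with $\dim Q=d$, apply the fiber-dimension theorem to bound the dimension of the image of the slice map, and then bound $\trd$ by that dimension), the paper uses the group once and explicitly: it picks $k+1$ affinely spanning points $U$ on $l$ and $d-k$ vertices $V$ of $Q$ with $U\cup V$ affinely spanning $\R^d$, and takes the unique affine map $\pi$ fixing $U$ and sending $V$ to rational points in general position. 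Then $\pi$ is the identity on $l$, so $P=\pi(Q)\cap l$, and $\pi(Q)$ has $v$ vertices of which $d-k$ are rational, whence $\mathbb{Q}(P)\subset$ an algebraic extension of a field generated by at most $d(v-(d-k))$ coordinates. The paper's version buys elementarity: no stratification, no semialgebraic or scheme-theoretic dimension theory, and no genericity issues, since the transcendence-degree count is read off directly from one normalized representative. Your version buys generality (it is the natural ``moduli count'' and would adapt to other normal forms), but to be complete you must (i) justify that the image of a $\mathbb{Q}$-definable stratum under the $\mathbb{Q}$-rational slice map is $\mathbb{Q}$-definable of dimension at most $dv-d(d-k)$, and (ii) use the fact that \emph{every} point (not only a generic one) of a $\mathbb{Q}$-definable set of dimension $D$ has transcendence degree at most $D$ over $\mathbb{Q}$ --- your specific $P$ need not be generic in the image; with that correction the argument goes through.
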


\begin{proof}
Let $U=(u_1,\ldots,u_{k+1})$ be a tuple of arbitrary points on $l$ satisfying $\dim\operatorname{conv} U=k$. We can find a tuple $V=(v_1,\ldots,v_{d-k})$ of $d-k$ vertices of $Q$ satisfying $\dim\operatorname{conv}\,U\cup V=d$.

Let $V=(v_1',\ldots,v_{d-k}')$ be a tuple of arbitrary rational points satisfying $\dim\operatorname{conv}\,U\cup V'=d$. Then there exists a unique affine transformation $\pi$ sending $(U,V)$ to $(U,V')$. Clearly, $\pi$ is identical on $l$, and the polytope $\pi(Q)$ has $d-k$ vertices with rational coordinates. We get $\trd(P)\leqslant\trd(\pi(Q))\leqslant dv-d(d-k)$.
\end{proof}

\begin{thr}\label{thrlowtrdeg}
Let $P\subset\R^k$ be a polytope. Then $\ic(P)\geqslant2\sqrt{\trd(P)}-k$.
\end{thr}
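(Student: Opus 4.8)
The plan is to apply Lemma~\ref{lemlowtrdeg} to an optimal slice representation of $P$ and then optimize the resulting estimate over the dimension of the outer polytope.

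First I would fix a polytope $Q$ with exactly $v := \ic(P)$ vertices together with a coordinate plane $H$ of dimension $k$ such that $P = Q \cap H$; these exist by the definition of $\ic$, and $H$ is automatically rational. The quantity $d := \dim Q$ is not under our control, so the next step is to arrange matters so that Lemma~\ref{lemlowtrdeg} applies with the right parameters. The natural choice is to pass to the ambient space $\R^d$ with $d := \dim \operatorname{aff}(Q \cup H)$: after an invertible affine change of coordinates of the original ambient space that is the identity on $H$, we may assume $\operatorname{aff}(Q \cup H) = \R^d$, that $H$ is a rational coordinate subspace of $\R^d$ of dimension $k$, and that $P = Q \cap H$ still holds with $Q$ having the same $v$ vertices. (The proof of Lemma~\ref{lemlowtrdeg} uses the hypothesis ``$\dim Q = d$'' only through the fact that the vertices of $Q$ together with $l$ affinely span $\R^d$, which continues to hold here.)

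Lemma~\ref{lemlowtrdeg} then gives $\trd(P) \le d(v - d + k)$. Now comes the one genuinely non-mechanical step: since $d$ is beyond our control, I bound the right-hand side uniformly in $d$. Writing $d(v-d+k) = -\bigl(d - \tfrac{v+k}{2}\bigr)^2 + \tfrac{(v+k)^2}{4}$ (equivalently, applying the AM--GM inequality to $d$ and $v-d+k$, whose sum equals $v+k$), we get $\trd(P) \le \tfrac{(v+k)^2}{4}$, and in particular this holds for the $d$ coming from our representation.

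Finally, rearranging $\trd(P) \le \tfrac{(v+k)^2}{4}$ yields $v + k \ge 2\sqrt{\trd(P)}$, that is, $\ic(P) = v \ge 2\sqrt{\trd(P)} - k$, which is the claim. I expect the main obstacle to be the bookkeeping in the reduction of the second paragraph — checking that one can move to an ambient space in which $Q$ and $H$ together affinely span everything, while keeping $H$ a rational coordinate subspace of dimension $k$ and neither disturbing $P$ nor increasing the number of vertices of $Q$ — rather than the short optimization that follows it.
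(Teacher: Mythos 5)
Your argument is essentially the paper's own proof: apply Lemma~\ref{lemlowtrdeg} to a slice representation realizing $\ic(P)$, then maximize $d(v+k-d)$ over $d$ to get $4\trd(P)\leqslant(v+k)^2$ and hence $v\geqslant2\sqrt{\trd(P)}-k$. The extra bookkeeping you add in the second paragraph (reducing to the case where $Q$ and $H$ affinely span the ambient space so that the lemma's full-dimensionality hypothesis holds) is a careful elaboration of a point the paper passes over silently, not a different route.
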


\begin{proof}
Assume that there is a $d$-dimensional polytope $Q$ with $v$ vertices such that $P$ is a slice of $Q$. By Lemma~\ref{lemlowtrdeg}, we get $\trd(P)\leqslant d(v+k-d)$. The expression $d(v+k-d)$ attains its maximum at $d=(v+k)/2$, so we get $4\trd(P)\leqslant(v+k)^2$ or $v\geqslant2\sqrt{\trd(P)}-k$. 
\end{proof}

Recall that the two polytopes are said to be \textit{projectively equivalent} if they can be obtained from each other by a projective transformation. We will need the following fact.

\begin{prop}\cite[Lemma 20]{PP}\label{lemprojeq}
If $P,P'\subset\R^d$ are projectively equivalent polytopes, then $\ic(P)=\ic(P')$.
\end{prop}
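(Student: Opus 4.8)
The plan is to pass to homogenization cones, where projective equivalence becomes linear isomorphism, and then transplant the slice structure from $P$ to $P'$. By symmetry of projective equivalence it suffices to prove $\ic(P')\leqslant\ic(P)$. Write $r=\ic(P)$ and fix a polytope $Q\subset\R^n$ with $r$ vertices and a plane $H\cong\R^d$ with $Q\cap H=P$. Let $\widehat Q=\operatorname{cone}\{(q,1):q\in Q\}\subset\R^{n+1}$, a pointed cone with at most $r$ extreme rays, and let $\widehat H\subset\R^{n+1}$ be the linear span of $\{(x,1):x\in H\}$; a routine check gives $\widehat Q\cap\widehat H=\widehat P$, where $\widehat P=\operatorname{cone}\{(x,1):x\in P\}$. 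A projective transformation that is regular on a polytope sends its homogenization cone to a linearly isomorphic cone — the linearization rescales each $(x,1)$ by the nonvanishing denominator — so from $\rho(P)=P'$ we obtain a linear automorphism $\ell$ of $\widehat H$ with $\ell(\widehat P)=\widehat{P'}$ (identifying $P'$ with its copy inside $H$).

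Next I would extend $\ell$ to a linear automorphism $L$ of $\R^{n+1}$; since $\ell$ maps $\widehat H$ onto itself, so does $L$. Put $C=L(\widehat Q)$: again a pointed cone with at most $r$ extreme rays, and $C\cap\widehat H=L(\widehat Q\cap\widehat H)=\ell(\widehat P)=\widehat{P'}$. Suppose we can find a linear functional $\varepsilon$ on $\R^{n+1}$ that is strictly positive on $C\setminus\{0\}$ and whose restriction to $\widehat H$ is the last-coordinate functional (which is strictly positive on $\widehat{P'}\setminus\{0\}$, every nonzero point of $\widehat{P'}$ having positive last coordinate). Then the bounded cross-section $R=C\cap\{\varepsilon=1\}$ is a polytope with at most $r$ vertices (one per extreme ray of $C$); it lies in the affine hyperplane $\{\varepsilon=1\}$; and $R\cap\widehat H=C\cap\widehat H\cap\{\varepsilon=1\}=\widehat{P'}\cap\{\text{last coordinate}=1\}$ is exactly the copy of $P'$, sliced off by the affine $d$-subspace $\widehat H\cap\{\varepsilon=1\}$. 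Since $\ic$ may be computed by slicing with an arbitrary affine $d$-subspace rather than the coordinate plane (compose with an affine automorphism of the ambient space), this gives $\ic(P')\leqslant r$, and equality then follows by symmetry.

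The only non-routine ingredient is the existence of $\varepsilon$, and it is precisely here that the boundedness of $Q$ enters. In dual terms it says: a linear functional on $\widehat H$ lying in the interior of the dual cone of $C\cap\widehat H$ inside $\widehat H$ extends to a linear functional in the interior of the dual cone $C^\vee\subset(\R^{n+1})^*$. I would argue as follows: since $C=L(\widehat Q)$ is pointed, $C^\vee$ is full-dimensional, so $\operatorname{int}C^\vee\neq\emptyset$; the restriction map $\pi\colon(\R^{n+1})^*\to\widehat H^*$ is surjective and linear, hence open, so $\pi(\operatorname{int}C^\vee)$ is a nonempty open convex cone in $\widehat H^*$; a standard duality computation identifies its closure with the dual cone of $C\cap\widehat H$ inside $\widehat H^*$; and a nonempty open convex set is the interior of its closure. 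Hence $\pi$ carries $\operatorname{int}C^\vee$ onto the interior of the dual cone of $C\cap\widehat H$, which supplies the required $\varepsilon$. I expect this dual-cone surjectivity to be the real obstacle: the naive extension of $\rho$ to the ambient space — acting by $\rho$ on $H$ and trivially in the transverse directions — is in general not regular on $Q$, so one genuinely has to use the transverse degrees of freedom, and that is exactly what the displayed extension property encodes; it relies in an essential way on $\widehat Q$, equivalently $Q$, being bounded. The remaining facts — the homogenization identities, that regular projective maps preserve convex hulls and extreme points, and the affine invariance of $\ic$ — are routine.
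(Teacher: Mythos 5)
The paper does not actually prove this proposition: it is imported verbatim as \cite[Lemma 20]{PP}, so there is no internal argument to measure you against, and your submission is a self-contained replacement for that citation. Your proof is correct. The route --- homogenize $Q$ and $P$, observe $\widehat Q\cap\widehat H=\widehat P$, linearize the projective equivalence to a linear automorphism $\ell$ of $\widehat H$ with $\ell(\widehat P)=\widehat{P'}$, extend to $L$ on $\R^{n+1}$, and then cut the pointed polyhedral cone $C=L(\widehat Q)$ by a functional $\varepsilon$ strictly positive on $C\setminus\{0\}$ that restricts on $\widehat H$ to the height functional --- is the standard way such projective-invariance statements are established, and you correctly identified the genuine issue: the naive extension of the projective map to $\R^n$ need not be regular on $Q$, so one must re-choose the cross-section, and your dual-cone argument (openness of the restriction map $(\R^{n+1})^*\to\widehat H^*$, the identification of the closure of the image of $\operatorname{int}C^\vee$ with the dual cone of $C\cap\widehat H$, and the fact that an open convex set is the interior of its closure) does deliver the required $\varepsilon$; since every cone in sight is polyhedral, hence closed, the closure bookkeeping is even automatic. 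Only two small points deserve an explicit word: the linearizing denominator has constant sign on the connected polytope $P$, so after possibly replacing the lift by its negative the cone $\widehat P$ really maps onto $\widehat{P'}$ and not $-\widehat{P'}$; and the characterization ``strictly positive on $K\setminus\{0\}$ iff in the interior of $K^\vee$'', which you use for both $C$ and $C\cap\widehat H$, needs $K$ closed and pointed --- both hold here and should be said. As for what each approach buys: your argument makes the paper self-contained at the price of roughly a page of cone duality, whereas the citation to \cite{PP} keeps Section~\ref{sectech} short; conceptually your proof also makes visible why boundedness of $Q$ (pointedness of $\widehat Q$) is what permits the new cross-section, which the bare citation hides.
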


We also need a sufficient condition for polytopes to be projectively equivalent. Let a polytope $P$ (with $v$ vertices and $f$ facets) be defined as the set of all points $x\in\R^n$ satisfying the conditions $c_i(x)\geq \beta_i$ and $c_j(x)=\beta_j$, for $i\in\{1,\dots,f\}$ and $j\in\{f+1,\dots,q\}$, where $c_1,\dots,c_q$ are linear functionals on $\R^n$. A slack matrix $S=S(P)$ of $P$ is an $f$-by-$v$ matrix satisfying $S_{it}=c_i(p_t)-\beta_i$, where $p_1,\dots,p_v$ denote the vertices of $P$, and we note that $S$ is nonnegative. We remark in passing that $\operatorname{rank}(S)=\dim P$, and the seminal result by Yannakakis~\cite{Yan} states that $\operatorname{rank}_+(S)=\ic(P)$. We are not going to use these results in what follows, but we need the following characterization. We say that matrices $S_1$, $S_2$ coincide up to \textit{scaling} if there are diagonal invertible matrices $D_1$, $D_2$ such that $S_1=D_1S_2D_2$.

\begin{prop}\cite[Corollary 1.5]{GPRT}\label{propslackeq}
If slack matrices of polytopes $P_1$, $P_2$ coincide up to scaling, then $P_1$ and $P_2$ are projectively equivalent.
\end{prop}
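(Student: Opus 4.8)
The plan is to reconstruct each polytope from its slack matrix, up to a projective transformation, via the homogeneous factorization of that matrix. Fix a polytope $P$ with vertices $p_1,\dots,p_v$ and facet inequalities $c_i(x)\geqslant\beta_i$, and set $r=\operatorname{rank}S(P)$. Lifting each vertex to $\tilde p_t=(p_t,1)$ and each facet inequality to the row vector $\tilde c_i=(c_i,-\beta_i)$ gives $S(P)_{it}=\tilde c_i\cdot\tilde p_t$, i.e. $S(P)=C\cdot V$, where $C$ has rows $\tilde c_i$ and $V$ has columns $\tilde p_t$; a short argument (using that the vertices affinely span the affine hull of $P$ and that not all facet hyperplanes of $P$ pass through one common point) shows that both factors have rank $r$. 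The first step is then the standard fact that a rank-$r$ matrix has an essentially unique rank-$r$ factorization: any other such factorization $S(P)=C'V'$ satisfies $C'=CG$ and $V'=G^{-1}V$ for a unique invertible $r\times r$ matrix $G$; moreover, passing from $V$ to $G^{-1}V$ is nothing but applying the projective transformation with matrix $G^{-1}$ to the homogeneous coordinates of the vertices of $P$.

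Now suppose $S(P_1)=D_1\,S(P_2)\,D_2$ with $D_1,D_2$ invertible and diagonal. The second step is to reduce to the case $D_1,D_2>0$ using nonnegativity: for every pair $(i,t)$ with $S(P_2)_{it}>0$ one has $(D_1)_{ii}(D_2)_{tt}>0$, so on each connected component of the support bipartite graph of $S(P_2)$ (facets and vertices joined when the corresponding entry is positive) all the relevant diagonal signs of $D_1$ and $D_2$ coincide; consequently $\operatorname{sign}(D_1)\,S(P_2)\,\operatorname{sign}(D_2)=S(P_2)$, and replacing $D_1,D_2$ by $|D_1|,|D_2|$ changes nothing. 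With $D_1,D_2>0$, write $S(P_i)=C_iV_i$ as above, with common rank $r$; then $(D_1C_2)(V_2D_2)$ is a rank-$r$ factorization of $S(P_1)$, so there is an invertible $G$ with $V_1=G^{-1}V_2D_2$ and $C_1=D_1C_2G$. Reading $V_1=G^{-1}V_2D_2$ column by column, the homogeneous coordinate vector of the $t$-th vertex of $P_1$ is $(D_2)_{tt}\,G^{-1}\tilde p_t^{(2)}$, so each vertex of $P_1$ is the image of the corresponding vertex of $P_2$ under the projective transformation $g$ with matrix $G^{-1}$.

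The last step is to upgrade this vertex-wise correspondence to $g(P_2)=P_1$. Since $(D_2)_{tt}>0$ and the last coordinate of $\tilde p_t^{(1)}$ equals $1$, the vector $G^{-1}\tilde p_t^{(2)}$ has positive last coordinate for every $t$; as any $x\in P_2$ is a convex combination of the $p_t^{(2)}$ and $G^{-1}$ is linear, $G^{-1}(x,1)$ has positive last coordinate for every $x\in P_2$. Hence $g$ is an admissible projective transformation on $P_2$, so it maps segments to segments, whence $g(P_2)=g(\operatorname{conv}\{p_t^{(2)}\})=\operatorname{conv}\{g(p_t^{(2)})\}=\operatorname{conv}\{p_t^{(1)}\}=P_1$; thus $P_1$ and $P_2$ are projectively equivalent.

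I expect the only genuinely delicate points to be the bookkeeping rather than the main line of argument: verifying that the homogeneous factors have full rank $r$ (so that factorization uniqueness applies), handling polytopes that are not full-dimensional by working inside their affine hulls, and — most importantly — the reduction to positive diagonal scalings, which is the one place where the nonnegativity of \emph{both} slack matrices is essential and without which the transformation $g$ need not be admissible. Once these are settled, everything reduces to the elementary uniqueness of rank-$r$ factorizations.
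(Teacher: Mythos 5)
The paper does not actually prove this proposition: it is imported verbatim from \cite[Corollary 1.5]{GPRT}, so there is no internal argument to compare yours against; what you have written is a self-contained proof, and it is correct in outline, following the standard route (essentially the one behind the cited result): homogenize vertices and facet inequalities to write $S(P)=CV$, use uniqueness of rank-$r$ factorizations to produce the matrix $G$, exploit nonnegativity of \emph{both} slack matrices to replace $D_1,D_2$ by $|D_1|,|D_2|$, and then verify that the induced projective map has positive last homogeneous coordinate on all of $P_2$, so that it is defined there and commutes with taking convex hulls, giving $g(P_2)=P_1$. One caution: the item you defer as bookkeeping --- passing to affine hulls --- is not optional but genuinely necessary for your factorization step, because in a non-full-dimensional polytope the homogenized factor $C$ has $n+1$ columns with $n+1>r$ and can have rank larger than $r$ (so the uniqueness statement you invoke is false as written); and this case is exactly the one the paper needs, since Claim~\ref{cla7} applies the proposition with $\pout(D')$, a two-dimensional polytope sitting inside $\R^n$ (its facet system $x_i\geqslant 0$ gives a $C$ of rank $n$, not $3$). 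After restricting each polytope to its affine hull (which leaves the slack matrix unchanged and makes both factors have rank $r=\dim P+1$, using that the facet hyperplanes of a bounded polytope of dimension at least one have no common point) and interpreting projective equivalence as a projective map between the affine hulls, your argument goes through. Two minor simplifications: the sign-reduction needs no connected-component discussion, since for every nonzero entry $(D_1)_{ii}(D_2)_{tt}>0$ follows directly from $S(P_1)_{it}>0$, which already gives $|D_1|S(P_2)|D_2|=D_1S(P_2)D_2$ entrywise; and in the factorization-uniqueness step the full column rank of the left factors is automatic once the right factors have full row rank $r$.
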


\section{The proof}\label{secpr}

Recall that we assume $n\geqslant 3$. In this section, we use the letters $i,j,k$ as indexes of coordinates of $n$-vectors and entries of $n\times n$ matrices, and we assume that these indexes belong to $\mathbb{Z}/n\mathbb{Z}$. In other words, we will assume that $i+1$ stands for $1$ if $i=n$.

Let $\alpha=(\alpha_1,\ldots,\alpha_n)$ be a real vector whose coordinates are algebraically independent over $\mathbb{Q}$. The $n\times n$ matrix $D$ is defined as $D_{ij}=(\alpha_i-\alpha_j)^2$, and our goal is to prove that $\operatorname{rank}_+(D)\geqslant2\sqrt{n}-2$. Let us note that a permutation of $\alpha$ leads to the corresponding permutation of rows and columns of $D$, which does not change nonnegative rank. Therefore, we can assume that the sequence $\alpha$ is increasing. Also, let us define $d_i$ as the sum of the entries in the $i$th column of $D$; we define $D'$ as the matrix obtained from $D$ by dividing every entry $D_{ij}$ by $d_j$. Clearly, the matrix $D'$ is column-stochastic and satisfies $\operatorname{rank}_+(D)=\operatorname{rank}_+(D')$. Let us begin with the computation of the polytope $\pout(D')$ mentioned in Proposition~\ref{proppinpout}.

\begin{claim}\label{cla0}
Let $u_k\in\R^n$ be the vector whose $i$th coordinate equals $(\alpha_i-\alpha_k)(\alpha_i-\alpha_{k+1})$. We have $\operatorname{rank}(D)=3$ and $u_k\in\col(D)$.
\end{claim}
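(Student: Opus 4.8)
The plan is to write $D$ explicitly as a sum of three rank-one matrices and read everything off from that expression. Expanding the square, $D_{ij}=(\alpha_i-\alpha_j)^2=\alpha_i^2-2\alpha_i\alpha_j+\alpha_j^2$, so setting $a=(\alpha_1,\ldots,\alpha_n)^\top$ and $q=(\alpha_1^2,\ldots,\alpha_n^2)^\top$ and writing $e$ for the all-ones vector of $\R^n$, one has $D=q\,e^\top-2\,a\,a^\top+e\,q^\top$. In particular $\col(D)\subseteq\operatorname{span}\{e,a,q\}$, a space of dimension at most three, so the bound $\operatorname{rank}(D)\leqslant3$ is immediate.

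Next I would verify the membership $u_k\in\col(D)$ at the level of this spanning set. The $i$th coordinate of $u_k$ is $(\alpha_i-\alpha_k)(\alpha_i-\alpha_{k+1})=\alpha_i^2-(\alpha_k+\alpha_{k+1})\,\alpha_i+\alpha_k\alpha_{k+1}$, so $u_k=q-(\alpha_k+\alpha_{k+1})\,a+\alpha_k\alpha_{k+1}\,e$, which certainly lies in $\operatorname{span}\{e,a,q\}$. It therefore remains only to upgrade the inclusion $\col(D)\subseteq\operatorname{span}\{e,a,q\}$ to an equality and to check that this span is genuinely three-dimensional; both facts follow just from the coordinates of $\alpha$ being pairwise distinct, so algebraic independence plays no role in this particular claim.

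For the dimension count, take any three indices $i_1,i_2,i_3$ (this is where $n\geqslant3$ is used); the corresponding $3\times 3$ submatrix of the array $[\,q\mid a\mid e\,]$ is a Vandermonde matrix in the distinct scalars $\alpha_{i_1},\alpha_{i_2},\alpha_{i_3}$, hence invertible, so $e,a,q$ are linearly independent. For the reverse inclusion, observe that the $j$th column of $D$ equals $q-2\alpha_j\,a+\alpha_j^2\,e$; picking three columns $j_1,j_2,j_3$ whose $\alpha$-values are distinct, the coefficient matrix expressing these columns in terms of $(q,a,e)$ has rows $(1,-2\alpha_{j_t},\alpha_{j_t}^2)$ and determinant equal to $-2$ times a Vandermonde determinant, hence nonzero. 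So those three columns already span $\operatorname{span}\{e,a,q\}$, which forces $\col(D)=\operatorname{span}\{e,a,q\}$ and $\operatorname{rank}(D)=3$, and in particular $u_k\in\col(D)$.

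The argument is routine linear algebra and I do not anticipate any genuine obstacle; the one point deserving attention is ruling out that $\col(D)$ collapses to a proper (one- or two-dimensional) subspace of $\operatorname{span}\{e,a,q\}$, which is exactly what the nonvanishing of the Vandermonde determinant above prevents.
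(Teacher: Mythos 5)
Your proof is correct and follows essentially the same route as the paper, which simply asserts that the $u_k$'s and $\col(D)$ are spanned by $(1,\ldots,1)^\top$, $(\alpha_1,\ldots,\alpha_n)^\top$, $(\alpha_1^2,\ldots,\alpha_n^2)^\top$ and leaves the verification to the reader. You have merely spelled out that verification, including the Vandermonde argument guaranteeing that $\col(D)$ is all of this three-dimensional span (so that $\operatorname{rank}(D)=3$ exactly and $u_k\in\col(D)$), using only that the $\alpha_i$ are pairwise distinct.
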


\begin{proof}
One can check that the $u_k$'s and $\col(D)$ are spanned by vectors $(1,\ldots,1)^\top$, $(\alpha_1,\ldots,\alpha_n)^\top$, $(\alpha_1^2,\ldots,\alpha_n^2)^\top$. 
\end{proof}

\begin{claim}\label{cla1}
The polytope $\pout(D')$ is an $n$-gon. The vertex $v_k$ of $\pout(D')$ is $s_k^{-1}u_k$, where $s_k$ is the sum of the coordinates of the vector $u_k$ as in Claim~\ref{cla0}.
\end{claim}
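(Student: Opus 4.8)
The plan is to show that $\pout(D') = \Delta_n \cap \col(D)$ is an $n$-gon by exhibiting $n$ candidate vertices — the normalized vectors $v_k = s_k^{-1} u_k$ — and checking two things: first, that each $v_k$ actually lies in $\pout(D')$; second, that these are precisely the extreme points of that polytope. For the first part, Claim~\ref{cla0} already gives $u_k \in \col(D)$, so $v_k \in \col(D)$ as well (assuming $s_k \neq 0$, which I would verify by noting that since $\alpha$ is increasing, the $i$th coordinate $(\alpha_i - \alpha_k)(\alpha_i - \alpha_{k+1})$ of $u_k$ is nonnegative for all $i$ — it is the product of two factors that are both negative when $i < k$, both positive when $i > k+1$, and the product vanishes or is nonpositive only at $i \in \{k, k+1\}$ where in fact it is $0$ for $i = k$ and $i = k+1$ — wait, more carefully: at $i=k$ the first factor is $0$, at $i=k+1$ the second factor is $0$, so the coordinate is $0$ there, and strictly positive elsewhere). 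Hence $u_k$ is a nonnegative vector with a positive coordinate sum $s_k > 0$, so $v_k = s_k^{-1} u_k \in \Delta_n$, giving $v_k \in \pout(D')$.

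The heart of the argument is to show $\pout(D')$ has exactly these $n$ extreme points and no others. I would argue via dimension and incidences. Since $\col(D)$ is $3$-dimensional (Claim~\ref{cla0}) and lies inside the hyperplane $\sum x_i = 1$ after restriction to $\Delta_n$, the polytope $\pout(D')$ is at most $2$-dimensional; I would check it is genuinely $2$-dimensional, hence a polygon. Each facet of $\Delta_n$ is $\{x_i = 0\}$, and $\pout(D')$ meets this facet in the set $\{x \in \col(D) \cap \Delta_n : x_i = 0\}$. The key computation is that the vector $u_k$ (equivalently $v_k$) has exactly two vanishing coordinates, namely the $k$th and the $(k+1)$th, as observed above. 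So $v_k$ lies on exactly the two facet-hyperplanes $\{x_k = 0\}$ and $\{x_{k+1} = 0\}$ of $\Delta_n$. A $2$-dimensional polytope that lies on two distinct supporting hyperplanes at a point has that point as a vertex, provided the two hyperplanes cut out genuinely different edges; I would verify $\col(D) \cap \{x_k = 0\}$ is a line (a $1$-dimensional slice of a $2$-dimensional polygon) distinct from $\col(D) \cap \{x_{k+1} = 0\}$, so $v_k$ is the unique intersection point of two edges and hence a vertex of $\pout(D')$. Conversely, every vertex of $\pout(D')$ lies on at least two facets of $\Delta_n$ (being $2$-dimensional, it needs two tight inequalities), i.e.\ satisfies $x_a = x_b = 0$ for some $a \neq b$; intersecting the line $\col(D) \cap \{x_a = 0, x_b = 0\}$ with $\Delta_n$ and checking against the explicit form of $\col(D)$ should force $\{a,b\} = \{k, k+1\}$ for some $k$ — essentially because a vector in $\col(D)$ is determined by a quadratic polynomial $p(t) = \lambda + \mu t + \nu t^2$ evaluated at the $\alpha_i$, and $p$ has at most two roots, so $x_a = x_b = 0$ pins down $p$ up to scaling as the quadratic with roots $\alpha_a, \alpha_b$; nonnegativity of the resulting vector on $\Delta_n$ then forces $\alpha_a, \alpha_b$ to be consecutive.

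The main obstacle I anticipate is the converse direction — proving there are no \emph{other} vertices. One has to rule out, for instance, a vertex lying on a single facet of $\Delta_n$ (impossible in dimension $2$, but this needs the dimension count to be airtight) and, more substantively, show that the line $\col(D) \cap \{x_a = 0, x_b = 0\}$ fails to meet the open simplex or fails to contribute a vertex unless $a, b$ are consecutive indices. The cleanest way is the quadratic-polynomial parametrization: identify $\col(D)$ with $\{(p(\alpha_1), \dots, p(\alpha_n)) : \deg p \le 2\}$, note that such a vector lies in $\Delta_n$ iff $p \ge 0$ at all $\alpha_i$ with $\sum p(\alpha_i) = 1$, and that it is a boundary point of $\pout(D')$ iff $p(\alpha_i) = 0$ for some $i$. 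A nonzero quadratic vanishing at $\alpha_i$ and nonnegative at all the other $\alpha_j$ must, since $\alpha$ is increasing and the $\alpha_j$ are dense enough in the relevant sense (here just: distinct), either have a double root at $\alpha_i$ or have its second root at an adjacent $\alpha_j$ — any non-adjacent pair of roots would force $p$ negative at the $\alpha_j$ strictly between them. This gives the full vertex list $\{v_1, \dots, v_n\}$ and completes the identification of $\pout(D')$ as an $n$-gon. Finally I would double-check non-degeneracy: the $v_k$ are pairwise distinct (their support patterns differ) and in convex position (no $v_k$ is a convex combination of the others, again from the support/root structure), so the $n$-gon is genuinely an $n$-gon and not a polygon with fewer vertices.
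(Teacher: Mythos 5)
Your overall strategy --- identify $\pout(D')$ as a polygon, check that each $v_k$ lies in it and is a vertex, then classify all vertices via the parametrization of $\col(D)$ by quadratics $p$ evaluated at the $\alpha_i$ --- is sound and close in spirit to the paper's, but your treatment of the wrap-around index $k=n$ is wrong, and this breaks both halves of your argument. The paper takes indices modulo $n$, so $u_n$ has $i$th coordinate $(\alpha_i-\alpha_n)(\alpha_i-\alpha_1)$; since $\alpha$ is increasing this is \emph{negative} for every $i$ with $1<i<n$, so $u_n$ is not nonnegative and $s_n<0$. Hence your verification that $v_k\in\Delta_n$ (``$u_k$ is nonnegative with coordinate sum $s_k>0$'') fails at $k=n$; the conclusion survives only because $v_n=s_n^{-1}u_n$ is a \emph{negative} multiple of $u_n$, a case you never treat. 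The same oversight propagates to your converse. A point of $\col(D)$ vanishing at coordinates $a\neq b$ corresponds to $c(t-\alpha_a)(t-\alpha_b)$ for a scalar $c$ that need not be positive: if $c<0$ the quadratic is nonnegative \emph{between} its roots and negative outside them, and it is nonnegative at all the $\alpha_i$ exactly when $\{\alpha_a,\alpha_b\}=\{\alpha_1,\alpha_n\}$. Your assertion that ``any non-adjacent pair of roots would force $p$ negative at the $\alpha_j$ strictly between them'' silently assumes $c>0$, so your classification finds only the $n-1$ vertices $v_1,\dots,v_{n-1}$ and would conclude that $\pout(D')$ is an $(n-1)$-gon, contradicting the claim. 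Once both signs of $c$ are considered, the case $c<0$ contributes exactly one further vertex, with zeros at positions $1$ and $n$, which is $v_n$ in the cyclic indexing; with that correction your proof goes through.

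For comparison, the paper sidesteps the full classification: it observes that the polygon has at most $n$ edges (one per facet of $\Delta_n$), hence at most $n$ vertices, that vertices are nonnegative points of the affine plane with two zero coordinates, and that the $n$ points $s_k^{-1}u_k$ are such points, so they exhaust the vertex set. Your root-location analysis is a legitimate substitute for that counting step and makes explicit why only cyclically adjacent pairs of zero coordinates can occur, but it must be carried out for both orientations of the parabola.
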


\begin{proof}
Since $\operatorname{rank}(D)=3$, the affine subspace $H=\col(D')\cap\{x_1+\ldots+x_n=1\}$ has dimension $2$, and we get $\dim\pout(D')=2$. Therefore, $\pout(D')$ is a polygon, and every edge of it comes as an intersection of $H$ and a facet of $\Delta_n$. We see that $\pout(D')$ has at most $n$ edges and, therefore, at most $n$ vertices. The vertices of $\pout(D')$ are intersections of $H$ with ridges of $\Delta_n$; in other words, the vertices are the nonnegative vectors in $H$ that have two zero coordinates. By Claim~\ref{cla0} the vectors in the assertion satisfy these properties; so we have identified all the $n$ vertices of $\pout(D')$.
\end{proof}

\begin{claim}\label{cla2}
A slack matrix of $\pout(D')$ is $(v_1|\ldots|v_n)$, where the $v_k$'s are as in Claim~\ref{cla1}.
\end{claim}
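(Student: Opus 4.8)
The plan is to unwind the definition of the slack matrix for the representation of $\pout(D')$ that is already implicit in Claims~\ref{cla0} and~\ref{cla1}. Recall that $\pout(D')=\Delta_n\cap\col(D')$ sits inside $\R^n$, cut out by the equality $x_1+\ldots+x_n=1$ together with the linear equations defining $\col(D')$, and by the $n$ inequalities $x_i\geqslant0$ for $i\in\{1,\dots,n\}$. I would therefore take $c_i(x)=x_i$ and $\beta_i=0$ for $i\in\{1,\dots,n\}$ as the facet data in the definition of slack matrix preceding Proposition~\ref{propslackeq}, and put the remaining constraints into the equality part $c_j(x)=\beta_j$.

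First I would check that these $n$ inequalities really are the facets of $\pout(D')$, i.e.\ that none is redundant. This is contained in the proof of Claim~\ref{cla1}: $\pout(D')$ is a $2$-dimensional polygon whose edges are the intersections of $H=\col(D')\cap\{x_1+\ldots+x_n=1\}$ with the facets $\{x_i=0\}$ of $\Delta_n$, and it has exactly $n$ vertices, hence exactly $n$ edges; as there are only $n$ candidate facets $\{x_i=0\}$, each contributes exactly one edge. So $f=n$, the $i$th row of the slack matrix corresponds to $x_i\geqslant0$, and the $k$th column to the vertex $v_k=s_k^{-1}u_k$ of Claim~\ref{cla1}.

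Then the computation is immediate: the $(i,k)$ entry of the slack matrix is $c_i(v_k)-\beta_i=(v_k)_i$, the $i$th coordinate of $v_k$, so the slack matrix is exactly the $n\times n$ matrix with columns $v_1,\dots,v_n$, that is, $(v_1|\ldots|v_n)$. As a consistency check, $(v_k)_i=s_k^{-1}(\alpha_i-\alpha_k)(\alpha_i-\alpha_{k+1})$ vanishes precisely for $i\in\{k,k+1\}$, so each column has exactly two zeros — as it must, each vertex of a polygon lying on exactly two edges. I do not expect a genuine obstacle here: the only point needing care is the non-redundancy of the $n$ inequalities, which has effectively been dispatched inside Claim~\ref{cla1}, and the rest is just reading off the definition of the slack matrix.
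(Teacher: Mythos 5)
Your proof is correct and matches the paper's argument: both take the description of $\pout(D')$ by the inequalities $x_i\geqslant0$ together with the equalities cutting out $\col(D')\cap\{x_1+\ldots+x_n=1\}$, and read off that the $(i,k)$ slack entry is the $i$th coordinate of $v_k$. Your explicit check that each $x_i\geqslant0$ is facet-defining (via the $n$-gon structure from Claim~\ref{cla1}) is exactly the reasoning the paper leaves implicit, so this is essentially the same proof, just slightly more careful.
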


\begin{proof}
The polygon $\pout(D')$ is defined by the equality $x_1+\ldots+x_n=1$, the equalities defining $\col(D)$, and the inequalities $x_i\geqslant0$. Therefore, $x_1\geqslant0,\ldots,x_n\geqslant0$ are facet defining inequalities of $\pout(D')$, and the $(i,j)$th entry of the slack matrix equals the $i$th coordinate of the $j$th vertex.
\end{proof}

\begin{claim}\label{cla3}
Every edge of $\pout(D')$ contains a vertex of $\pin(D')$.
\end{claim}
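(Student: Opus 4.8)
The plan is to exhibit, on each edge of $\pout(D')$, an explicit point that turns out to be a vertex of $\pin(D')$ --- namely a suitable column of $D'$. I would begin by making explicit the description of the edges of $\pout(D')$ that is already implicit in the proof of Claim~\ref{cla1}: since $\pout(D')=\Delta_n\cap\col(D)$ is $2$-dimensional with exactly $n$ vertices, and since every edge of it arises as the intersection of $\pout(D')$ with a facet $\{x_k=0\}$ of $\Delta_n$, the polygon has exactly $n$ edges, one for each such facet. Write $e_k:=\pout(D')\cap\{x_k=0\}$, so that $e_1,\ldots,e_n$ are precisely the edges of $\pout(D')$.

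Next I would fix an index $k$ and consider the $k$th column $c_k$ of $D'$, i.e.\ the vector with $i$th coordinate $(\alpha_i-\alpha_k)^2/d_k$; recall that $\pin(D')$ is the convex hull of $c_1,\ldots,c_n$. Three facts suffice: (i) the $k$th coordinate of $c_k$ is $0$, because $(\alpha_i-\alpha_k)^2=0$ forces $i=k$; (ii) $c_k\in\Delta_n$, since $D'$ is column-stochastic; and (iii) $c_k\in\col(D')=\col(D)$. Combining these gives $c_k\in\Delta_n\cap\col(D)\cap\{x_k=0\}=e_k$.

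It then remains to check that $c_k$ is genuinely a vertex of $\pin(D')$, not merely a point of it. For every $j\neq k$ the $k$th coordinate of $c_j$ equals $(\alpha_k-\alpha_j)^2/d_j>0$, so any convex combination of the columns other than $c_k$ has strictly positive $k$th coordinate and therefore cannot equal $c_k$; hence $c_k$ is an extreme point of $\operatorname{conv}(c_1,\ldots,c_n)=\pin(D')$. Since we have shown $c_k\in e_k$, letting $k$ range over all indices proves that every edge of $\pout(D')$ contains a vertex of $\pin(D')$. (One can also bypass the extremality check: each $c_j$ lies in $\Delta_n\cap\col(D)=\pout(D')$, so $\pin(D')\subseteq\pout(D')$, whence $e_k\cap\pin(D')$ is a nonempty face of $\pin(D')$ and therefore contains a vertex of it.)

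The argument is essentially routine. The only point that needs care is the bookkeeping that matches the edges of $\pout(D')$ with the facet slices $\{x_k=0\}$, together with the observation that the vanishing diagonal of $D$ places the $k$th column of $D'$ exactly on the $k$th such slice; I do not anticipate a genuine obstacle.
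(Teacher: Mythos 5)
Your proof is correct and follows essentially the same route as the paper: place the $k$th column of $D'$ on the slice $\pout(D')\cap\{x_k=0\}$, identify that slice with an edge (the one joining $v_{k-1}$ and $v_k$ from Claim~\ref{cla1}), and note the column is a vertex of $\pin(D')$. Your explicit extremality check (and the alternative face argument) just fills in a step the paper asserts without proof.
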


\begin{proof}
Note that the $i$th column of $D'$ is a vertex of $\pin(D')$ and has a zero at the $i$th coordinate. Therefore, this column belongs to the convex hull of those vertices of $\pout(D')$ that have zeros at their $i$th coordinates. By Claim~\ref{cla1}, there are only two such vertices, $v_i$ and $v_{i-1}$, and their convex hull is the edge connecting them.
\end{proof}

\begin{claim}\label{cla4}
Let $P$ be a polygon satisfying $\pin(D')\subset P\subset \pout(D')$. Then any edge of $\pout(D')$ contains some vertex of $P$.
\end{claim}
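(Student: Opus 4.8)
The plan is to bootstrap from Claim~\ref{cla3}, which already tells us that every edge $e$ of $\pout(D')$ contains a vertex $w$ of $\pin(D')$. Since $\pin(D')\subset P$, this $w$ is automatically a point of $P$ lying on $e$, so $P$ meets $e$. The remaining task is to upgrade ``$P$ meets $e$'' to ``$P$ has a \emph{vertex} on $e$'', and the tool for this is the elementary fact that a supporting hyperplane of a polytope cuts out a face.

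Concretely, let $\ell$ be the affine hull of the edge $e$. Since $\pout(D')$ is a convex polygon with $e$ as an edge, $\ell$ is a supporting line of $\pout(D')$ and $\pout(D')\cap\ell=e$. Because $P\subset\pout(D')$, the line $\ell$ supports $P$ as well, and it meets $P$ (indeed $w\in P\cap e\subset P\cap\ell$). Hence $F:=P\cap\ell$ is a nonempty face of the polytope $P$, and $F\subset\pout(D')\cap\ell=e$. Every nonempty face of a polytope contains a vertex of that polytope; such a vertex of $P$ then lies in $F\subset e$, which is exactly what we want.

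I do not anticipate a genuine obstacle here. Everything rests on Claim~\ref{cla3} together with two routine facts of convex geometry: that the affine hull of an edge of a polygon meets the polygon in exactly that edge, and that a supporting line of $\pout(D')$ is still a supporting line of $P$ once we know $P\subset\pout(D')$. The one point deserving a moment's care is checking that $F=P\cap\ell$ is genuinely a face of $P$ (so that it is guaranteed to contain a vertex) rather than an arbitrary planar slice; this holds precisely because $\ell$ supports $P$, so $F$ is the exposed face of $P$ determined by $\ell$. Once Claim~\ref{cla4} is established, each of the $n$ edges of the $n$-gon $\pout(D')$ contains a vertex of $P$, which is the structural input the rest of the argument will exploit to force $\trd(P)$, and hence $\ic(P)$, to be large via Theorem~\ref{thrlowtrdeg}.
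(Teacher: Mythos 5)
Your argument is correct and is exactly the reasoning the paper compresses into ``Follows directly from Claim~\ref{cla3}'': the $\pin(D')$-vertex on an edge $e$ lies in $P$, and since $P\subset\pout(D')$ the affine hull of $e$ supports $P$, so the nonempty exposed face $P\cap\ell\subset e$ contains a vertex of $P$. You have simply spelled out the routine convexity step the paper leaves implicit; no difference in approach.
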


\begin{proof}
Follows directly from Claim~\ref{cla3}. 
\end{proof}

\begin{claim}\label{cla5}
We define the points $w_k=(w_{k1},w_{k2})\in\R^2$, where $k\in\{1,\ldots,n\}$ and
$$w_{k1}=\frac{1}{\alpha_k}+\frac{1}{\alpha_{k+1}}+\frac{1}{\alpha_k\alpha_{k+1}},\,\,\,\,\,\,
w_{k2}=-\frac{1}{\alpha_k}-\frac{1}{\alpha_{k+1}}+\frac{1}{\alpha_k\alpha_{k+1}}.$$
The polygon $W=\operatorname{conv}\{w_1,\ldots,w_n\}$ is projectively equivalent to $\pout(D')$.
\end{claim}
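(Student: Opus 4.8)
The plan is to compute a slack matrix of $W$ and to show that it coincides, up to scaling of rows and of columns, with the slack matrix $(v_1|\ldots|v_n)$ of $\pout(D')$ produced in Claim~\ref{cla2}; Proposition~\ref{propslackeq} then gives the assertion. Recall from Claims~\ref{cla0}--\ref{cla2} that the $(i,j)$th entry of this slack matrix is $s_j^{-1}(\alpha_i-\alpha_j)(\alpha_i-\alpha_{j+1})$, and recall from the proof of Claim~\ref{cla3} that the facet $\{x_i=0\}$ of $\pout(D')$ contains exactly the two vertices $v_{i-1}$ and $v_i$, so that $v_1,\ldots,v_n$ appear in this cyclic order around $\pout(D')$. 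To match this, one verifies that $w_1,\ldots,w_n$ are the vertices of $W$, listed in this cyclic order, and labels the edge $[w_{i-1},w_i]$ of $W$ by $i$ so that it corresponds to the facet $\{x_i=0\}$ of $\pout(D')$. With these labels, a slack matrix of $W$ has, up to a sign and a positive rescaling of each row, $(i,j)$th entry equal to $\det(\check w_{i-1}\ \check w_i\ \check w_j)$, where $\check w_k:=(1,w_{k1},w_{k2})^\top\in\R^3$.

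So the claim comes down to the determinant identity $\det(\check w_{i-1}\ \check w_i\ \check w_j)=\rho_i\,\sigma_j\,(\alpha_i-\alpha_j)(\alpha_i-\alpha_{j+1})$, where $\rho_i$ is a nonzero function of $\alpha_{i-1},\alpha_i,\alpha_{i+1}$ alone and $\sigma_j$ a nonzero function of $\alpha_j,\alpha_{j+1}$ alone (explicitly, up to a global constant, $\rho_i=(\alpha_{i+1}-\alpha_{i-1})/(\alpha_{i-1}\alpha_i^2\alpha_{i+1})$ and $\sigma_j=1/(\alpha_j\alpha_{j+1})$). To prove it, first clear denominators: multiplying the three columns by $\alpha_{i-1}\alpha_i$, $\alpha_i\alpha_{i+1}$ and $\alpha_j\alpha_{j+1}$ replaces $\check w_k$ by $\tilde w_k=(\alpha_k\alpha_{k+1},\,1+\alpha_k+\alpha_{k+1},\,1-\alpha_k-\alpha_{k+1})^\top$ and multiplies the determinant by $\alpha_{i-1}\alpha_i^2\alpha_{i+1}\,\alpha_j\alpha_{j+1}$. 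Replacing the last two rows of $(\tilde w_{i-1}\ \tilde w_i\ \tilde w_j)$ by their half-sum $(1,1,1)$ and their half-difference $(\alpha_{i-1}+\alpha_i,\,\alpha_i+\alpha_{i+1},\,\alpha_j+\alpha_{j+1})$ is an invertible row operation, and after one row swap the problem reduces to the elementary evaluation
$$\det\begin{pmatrix}1&1&1\\ ab&bc&xy\\ a+b&b+c&x+y\end{pmatrix}=(c-a)(x-b)(b-y),$$
checked by direct expansion. Specializing $(a,b,c,x,y)=(\alpha_{i-1},\alpha_i,\alpha_{i+1},\alpha_j,\alpha_{j+1})$ and undoing the rescalings gives the identity. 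Consequently the slack matrix of $W$ equals $\operatorname{diag}(\rho_i)\cdot(v_1|\ldots|v_n)\cdot\operatorname{diag}(s_j\sigma_j)$ up to signs, so it agrees with $(v_1|\ldots|v_n)$ up to scaling, and Proposition~\ref{propslackeq} applies.

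I expect the main obstacle to be precisely this bookkeeping: getting the correspondence between the $n$ edges of $W$ and the $n$ facets $\{x_i=0\}$ of $\pout(D')$ in the right cyclic order, and checking that the points $w_1,\ldots,w_n$ really are in convex position in the asserted order, since that is what legitimizes reading the $3\times3$ minors $\det(\check w_{i-1}\ \check w_i\ \check w_j)$ as entries of an honest slack matrix. Everything else is routine: the determinant evaluation $(c-a)(x-b)(b-y)$ and the elementary rescalings, and then the single invocation of Proposition~\ref{propslackeq} to pass from "slack matrices coincide up to scaling" to "projectively equivalent".
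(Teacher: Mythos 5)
Your proposal is correct and follows essentially the same route as the paper: identify the slack matrix of $W$ with the matrix of $3\times3$ determinants $\det(\check w_{i-1}\ \check w_i\ \check w_j)$, factor it as (row factor)$\cdot$(column factor)$\cdot(\alpha_i-\alpha_j)(\alpha_i-\alpha_{j+1})$, match it with the matrix $(v_1|\ldots|v_n)$ from Claim~\ref{cla2} up to scaling, and invoke Proposition~\ref{propslackeq}. Your explicit reduction to the identity $\det\bigl(\begin{smallmatrix}1&1&1\\ ab&bc&xy\\ a+b&b+c&x+y\end{smallmatrix}\bigr)=(c-a)(x-b)(b-y)$ is just a worked-out version of the paper's ``straightforward checking,'' and the cyclic-order bookkeeping you flag is likewise left implicit in the paper.
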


\begin{proof}
In view of Proposition~\ref{propslackeq}, it suffices to proof that the slack matrices of $W$ and $\pout(D')$ can be obtained from each other by the scaling of rows and columns. The slack matrix of $W$ can be obtained as the matrix $S$ in which the $(i,k)$th entry is the oriented volume of the triangle with vertices $w_{i-1}$, $w_i$, $w_{k}$. That is, we have 
$$S_{ik}=\det \begin{pmatrix}w_{i-1,1}&w_{i-1,2}&1\\
w_{i1}&w_{i2}&1\\
w_{k1}&w_{k2}&1\end{pmatrix},$$
and the straightforward checking shows that
$$S_{ik}=\frac{2(\alpha_{i-1}-\alpha_{i+1})}{\alpha_{i-1}\alpha_i^2\alpha_{i+1}}\cdot\frac{1}{\alpha_{k}\alpha_{k+1}}\cdot(\alpha_i-\alpha_k)(\alpha_i-\alpha_{k+1}).$$ Here, the first multiplier is independent of a column index, the second multiplier is independent of a row index, so we see that the matrix $S$ can be obtained by scaling from the matrix $S'$ defined as $S_{ik}'=(\alpha_i-\alpha_k)(\alpha_i-\alpha_{k+1})$. The matrix $S'$ coincides with the matrix as in Claim~\ref{cla2} up to the scaling of columns, so we are done.
\end{proof}

\begin{claim}\label{cla6}
Let $h_k$ be a point on the straight line connecting the points $w_{k-1}$ and $w_k$ as in Claim~\ref{cla5}. Then $\alpha_k$ is algebraic in the coordinates of $h_k$.
\end{claim}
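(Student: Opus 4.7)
The plan is to write down the equation of the line $\ell_k$ through $w_{k-1}$ and $w_k$ in explicit form and to observe that, after a convenient change of variables, the dependence on $\alpha_{k-1}$ and $\alpha_{k+1}$ miraculously cancels, leaving a line equation that is quadratic in $b := 1/\alpha_k$. This will immediately show that for any point $(x,y)$ on $\ell_k$, the value $b$ satisfies a quadratic polynomial over $\mathbb{Q}(x,y)$, and hence so does $\alpha_k$.

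First I would set $a = 1/\alpha_{k-1}$, $b = 1/\alpha_k$, $c = 1/\alpha_{k+1}$, so that $w_{k-1} = (a+b+ab,\, -a-b+ab)$ and $w_k = (b+c+bc,\, -b-c+bc)$. Computing the difference coordinatewise gives
\[ w_k - w_{k-1} = (c-a)\cdot(1+b,\, b-1), \]
which already shows the slope of $\ell_k$ depends only on $\alpha_k$. Taking the normal vector $(1-b,\, 1+b)$, I would then check by direct expansion that $(1-b)\cdot w_{k-1,1} + (1+b)\cdot w_{k-1,2} = -2b^2$, giving the line equation
\[ (1-b)\,x + (1+b)\,y = -2b^2. \]

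Rewriting this as $2b^2 + (y-x)\,b + (x+y) = 0$, we see that $b$ is a root of a nonzero quadratic polynomial over $\mathbb{Q}(x,y)$; hence $b$, and equivalently $\alpha_k = 1/b$, is algebraic over the field generated by the coordinates of $h_k$.

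The only nontrivial step is the cancellation of the parameters $a$ and $c$ in the constant term of the line equation, but this reduces to a routine one-line expansion and works out cleanly. Heuristically, such a cancellation is to be expected: under the projective equivalence of Claim~\ref{cla5}, the edge of $W$ between $w_{k-1}$ and $w_k$ corresponds to the edge of $\pout(D')$ supported by the coordinate hyperplane $x_k = 0$, so its defining equation should only ``see'' the index~$k$.
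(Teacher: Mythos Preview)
Your argument is correct and, after the change of variables, yields exactly the same quadratic as the paper: writing $\sigma_1=(x+y)/2$ and $\sigma_2=(x-y)/2$, your equation $2b^2+(y-x)b+(x+y)=0$ is precisely $b^2-\sigma_2 b+\sigma_1=0$, which the paper obtains by parametrizing $h_k=\lambda w_{k-1}+\mu w_k$ and recognizing $\sigma_1,\sigma_2$ as the product and sum of $1/\alpha_k$ and $\lambda/\alpha_{k-1}+\mu/\alpha_{k+1}$ via Vieta's formulas. So the approach is essentially the same, just presented as an implicit line equation rather than a parametric one.
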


\begin{proof}
The coordinates of $h_k$ are $\lambda w_{k-1,1}+\mu w_{k1}$ and $\lambda w_{k-1,2}+\mu w_{k2}$, for some $\lambda,\mu\in\R$ satisfying $\lambda+\mu=1$. The half-sum and half-difference of these coordinates are equal, respectively, to
$$\sigma_1=\frac{1}{\alpha_k}\cdot\left(\frac{\lambda}{\alpha_{k-1}}+\frac{\mu}{\alpha_{k+1}}\right),\,\,\,\,\,\,\sigma_2=\frac{\lambda}{\alpha_{k-1}}+\frac{\mu}{\alpha_{k+1}}+\frac{1}{\alpha_k}.$$
By Vieta's formulas, one of the roots of the equation $t^2-\sigma_2t+\sigma_1=0$ equals $1/\alpha_k$ (while the other is $\lambda/\alpha_{k-1}+\mu/\alpha_{k+1}$).
\end{proof}

\begin{claim}\label{cla7}
Let $P$ be a polygon satisfying $\pin(D')\subset P\subset \pout(D')$. Then $\ic(P)\geqslant2\sqrt{n}-2$.
\end{claim}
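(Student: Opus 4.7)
The plan is to reduce the problem to the polygon $W$ from Claim~\ref{cla5} and then invoke Theorem~\ref{thrlowtrdeg} for $k=2$. First I would let $\pi$ be the projective transformation realizing the equivalence $\pout(D')\cong W$ of Claim~\ref{cla5}, and set $P'=\pi(P)$. Since $\pi$ is a bijection mapping vertices of $\pout(D')$ to vertices of $W$ and edges to edges, the chain $\pin(D')\subset P\subset\pout(D')$ together with Claim~\ref{cla4} yields a polygon $P'\subset W$ with the property that \emph{every} edge of $W$ (i.e.\ every segment $[w_{k-1},w_k]$) contains at least one vertex of $P'$. By Proposition~\ref{lemprojeq}, $\ic(P)=\ic(P')$, so it suffices to bound $\ic(P')$ from below.

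Next, for each $k\in\{1,\ldots,n\}$, I would pick a vertex $h_k$ of $P'$ lying on the edge $[w_{k-1},w_k]$. Claim~\ref{cla6} then gives that $\alpha_k$ is algebraic over the field $\mathbb{Q}(h_k)$ generated by the coordinates of $h_k$. Since the coordinates of every $h_k$ lie in $\mathbb{Q}(P')$, we conclude that each of $\alpha_1,\ldots,\alpha_n$ is algebraic over $\mathbb{Q}(P')$.

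The algebraic independence of $\alpha_1,\ldots,\alpha_n$ over $\mathbb{Q}$ now forces $\trd(P')\geqslant n$: indeed, the algebraic closure $\overline{\mathbb{Q}(P')}$ contains $\mathbb{Q}(\alpha_1,\ldots,\alpha_n)$, and taking algebraic closures preserves transcendence degree, so
$$\trd(P')=\trd\bigl(\overline{\mathbb{Q}(P')}/\mathbb{Q}\bigr)\geqslant\trd\bigl(\mathbb{Q}(\alpha_1,\ldots,\alpha_n)/\mathbb{Q}\bigr)=n.$$
Applying Theorem~\ref{thrlowtrdeg} to the two-dimensional polytope $P'$ (so $k=2$) gives
$\ic(P)=\ic(P')\geqslant 2\sqrt{\trd(P')}-2\geqslant 2\sqrt{n}-2$, as required.

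The only nontrivial point in this plan is checking that selecting one vertex of $P'$ per edge of $W$ is legitimate, i.e.\ that Claim~\ref{cla4} transports through the projective equivalence; but this is immediate because $\pi$ maps edges of $\pout(D')$ onto edges of $W$ and vertices of $P$ onto vertices of $P'$. Everything else is purely bookkeeping — the geometric content, the projective normalization, and the algebraic independence hook have all been assembled in the preceding claims.
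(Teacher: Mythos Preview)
Your proposal is correct and follows essentially the same route as the paper's own proof: transport $P$ to $P'\subset W$ via the projective equivalence of Claim~\ref{cla5}, use Claim~\ref{cla4} to place a vertex of $P'$ on each edge $[w_{k-1},w_k]$, invoke Claim~\ref{cla6} to make every $\alpha_k$ algebraic over $\mathbb{Q}(P')$, deduce $\trd(P')\geqslant n$, and finish with Theorem~\ref{thrlowtrdeg} and Proposition~\ref{lemprojeq}. Your write-up is a bit more explicit (e.g.\ the algebraic-closure justification for the transcendence degree inequality), but there is no substantive difference in strategy.
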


\begin{proof}
The polytopes $\pout(D')$ and $W$ are projectively equivalent by Claim~\ref{cla5}; let $\pi$ be a projective transformation sending $\pout(D')$ to $W$. Claim~\ref{cla4} shows that every edge of $W$ contains some vertex of $P'=\pi(P)$, and from Claim~\ref{cla6} we get that any $\alpha_k$ is algebraic over $\mathbb{Q}(P')$. Since $\alpha_k$'s are algebraically independent, we get $\trd(P')\geqslant n$, and Theorem~\ref{thrlowtrdeg} implies $\ic(P')\geqslant2\sqrt{n}-2$. Finally, Proposition~\ref{lemprojeq} implies $\ic(P)=\ic(P')$, and we get the desired result.
\end{proof}

In view of Proposition~\ref{proppinpout}, Claim~\ref{cla7} completes the proof of Theorem~\ref{thrdistlower}. Therefore, we get the lower bound for $\operatorname{rank}_{+}(D)$, which allows us to prove all the results announced in Section~1. We note that this bound is still quite far from the best known upper bound, which is $O(n/\ln^{\circ6}n)$, see~\cite{mysublin}. (Here $\ln^{\circ6}$ denotes the sixth iteration of the logarithm.) Proving that there are $n\times n$ distance matrices $D$ satisfying  $\operatorname{rank}_{+}(D)\in\omega(\sqrt{n})$ seems to require an essentially new technique, and our approach does not seem to lead to such an improvement. In particular, it is known~\cite{mysublin} that there are generic $n$-gons $P$ satisfying $\ic(P)\in O(\sqrt{n})$, which means that Theorem~\ref{thrlowtrdeg} cannot be improved substantially.

\smallskip

I would like to thank Troy Lee for pointing my attention to this problem and helpful comments. 

\end{document}